\documentclass[12pt,twoside]{amsart}
\usepackage{amssymb,amsmath,amsthm, amscd, enumerate, mathrsfs}
\usepackage{graphicx, hhline}
\usepackage[all]{xy}
\usepackage[usenames]{color}
\usepackage{hyperref}
\hypersetup{colorlinks=true}

\title{On log canonical rational singularities}
\author{Osamu Fujino} 
\date{2015/3/4, version 1.00}
\keywords{rational singularities, log canonical singularities, 
minimal model program, log canonical surfaces}
\subjclass[2010]{Primary 14E30; Secondary 14J17}
\address{Department of Mathematics, Graduate School of Science, 
Kyoto University, Kyoto 606-8502, Japan}
\email{fujino@math.kyoto-u.ac.jp}

\newcommand{\Exc}[0]{{\operatorname{Exc}}}
\newcommand{\Supp}[0]{{\operatorname{Supp}}}
\newtheorem{thm}{Theorem}[section]
\newtheorem{lem}[thm]{Lemma}
\newtheorem{cor}[thm]{Corollary}

\theoremstyle{definition}

\newtheorem{rem}[thm]{Remark}
\newtheorem*{ack}{Acknowledgments} 
\newtheorem{say}[thm]{}
\newtheorem{ex}[thm]{Example}

\newtheorem{case}{Case}
\begin{document}

\maketitle 

\begin{abstract}
We prove that the class of log canonical rational singularities is closed 
under the basic operations of the minimal model program. 
We also give some supplementary results on the minimal model program 
for log canonical surfaces. 
\end{abstract}

\tableofcontents 

\section{Introduction}\label{sec1} 

In this short note, we prove the following theorems, which are missing 
in \cite{fujino-fund}. This short note is a supplement to \cite{fujino-fund}, 
\cite{fujino-some}, and \cite{fujino-surface}.  

\begin{thm}\label{thm1.1} 
Let $(X, \Delta)$ be a log canonical 
pair and let $f:X\to Y$ be a projective 
surjective morphism such that 
$f_*\mathcal O_X\simeq \mathcal O_Y$ and that $-(K_X+\Delta)$ is $f$-ample. 
Assume that $X$ has only rational singularities. 
Then $Y$ has only rational singularities.  
\end{thm}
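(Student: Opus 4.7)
My plan is to first show that the natural morphism $\mathcal O_Y\to Rf_*\mathcal O_X$ is a quasi-isomorphism in $D^b(Y)$, and then to deduce rationality of $Y$ from rationality of $X$ via Kov\'acs' splitting criterion for rational singularities.

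\textbf{Normality and vanishing.} Since $(X,\Delta)$ is log canonical, $X$ is normal, and the condition $f_*\mathcal O_X\simeq\mathcal O_Y$ immediately forces $Y$ to be normal as well. For the vanishing $R^if_*\mathcal O_X=0$ ($i>0$), I would invoke the relative vanishing theorem for log canonical pairs: taking the Cartier divisor $L=0$, we have $L-(K_X+\Delta)=-(K_X+\Delta)$, which is $f$-ample by hypothesis, so the Ambro--Fujino vanishing theorem (as developed in the ``fundamental theorems'' package of \cite{fujino-fund}) gives $R^if_*\mathcal O_X=0$ for all $i>0$. Together with $f_*\mathcal O_X\simeq\mathcal O_Y$, this shows that $\mathcal O_Y\to Rf_*\mathcal O_X$ is an isomorphism in $D^b(Y)$.

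\textbf{Descending rationality.} Let $\pi\colon\widetilde X\to X$ be a resolution of singularities. Since $X$ has only rational singularities, $R\pi_*\mathcal O_{\widetilde X}\simeq\mathcal O_X$, and hence $R(f\circ\pi)_*\mathcal O_{\widetilde X}\simeq Rf_*\mathcal O_X\simeq\mathcal O_Y$. Thus $f\circ\pi\colon\widetilde X\to Y$ is a proper surjective morphism from a smooth variety such that the canonical map $\mathcal O_Y\to R(f\circ\pi)_*\mathcal O_{\widetilde X}$ is an isomorphism, in particular a split monomorphism, in $D^b(Y)$. By Kov\'acs' splitting characterization of rational singularities, $Y$ has only rational singularities.

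\textbf{Main obstacle.} The technical core of the argument is the vanishing $R^if_*\mathcal O_X=0$. For klt pairs this is immediate from relative Kawamata--Viehweg vanishing, but for strictly log canonical pairs the situation is delicate: higher direct images can carry torsion supported along the images of log canonical centers, and a naive Kawamata--Viehweg argument breaks down. The hypothesis that $-(K_X+\Delta)$ is $f$-ample on the whole of $X$, and hence ample on every lc center, is precisely what the Ambro--Fujino vanishing requires in order to produce unconditional vanishing; this is the step where the log canonical assumption and the $f$-ampleness are both essential, and it is where I would expect to expend the most care.
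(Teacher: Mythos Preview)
Your proof is correct and follows essentially the same route as the paper's: use the relative Kodaira-type vanishing for log canonical pairs to obtain $Rf_*\mathcal O_X\simeq\mathcal O_Y$, and then invoke Kov\'acs' characterization to conclude that $Y$ has rational singularities. The only cosmetic difference is that the paper applies Kov\'acs' criterion directly to $f\colon X\to Y$ (using that $X$ already has rational singularities), whereas you first pass to a resolution $\widetilde X\to X$ and apply the criterion to the composite; both presentations are equivalent.
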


We can easily prove Theorem \ref{thm1.1} by 
the relative Kodaira type vanishing theorem for log canonical pairs 
and Kov\'acs's characterization of rational singularities. 
Of course, the vanishing theorem for log canonical pairs 
is nontrivial in the classical minimal model program (see \cite{kollar-mori}). 
However, now we can freely use such a powerful vanishing theorem 
for log canonical pairs 
(see, for example, \cite{fujino-fund} and \cite{fujino-foundation}). 
Note that we do not assume that $f$ is birational in Theorem \ref{thm1.1}. 

\begin{thm}\label{thm1.2}
We consider a commutative diagram 
$$
\xymatrix{
X \ar@{-->}[rr]^\phi\ar[rd]_f&& X^+ \ar[ld]^{f^+}\\ 
& Y&
}
$$
where $(X, \Delta)$ and $(X^+, \Delta^+)$ are log canonical, 
$f$ and $f^+$ are projective birational morphisms, 
and $Y$ is normal. 
Assume that 
\begin{itemize}
\item[(i)] $f_*\Delta=f^+_*\Delta^+$, 
\item[(ii)] $-(K_X+\Delta)$ is $f$-ample, and 
\item[(iii)] $K_{X^+}+\Delta^+$ is $f^+$-ample. 
\end{itemize}
We further assume that 
$X$ has only rational singularities. 
Then 
$X^+$ has only rational singularities. 
\end{thm}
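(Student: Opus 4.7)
The idea is to reduce Theorem \ref{thm1.2} to Theorem \ref{thm1.1} in two steps: first apply Theorem \ref{thm1.1} to $f$ to conclude that $Y$ has rational singularities, and then transfer this rationality across the diagram to $X^+$.

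\textbf{Step 1.} I would apply Theorem \ref{thm1.1} directly to $f : X \to Y$. The hypothesis $f_*\mathcal{O}_X \simeq \mathcal{O}_Y$ holds automatically because $f$ is projective birational and $Y$ is normal. Combined with hypothesis (ii), the log canonicity of $(X, \Delta)$, and the assumed rationality of $X$, Theorem \ref{thm1.1} yields that $Y$ has rational singularities.

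\textbf{Step 2.} Next, choose a common log resolution $W$ of $(X, \Delta)$, $(X^+, \Delta^+)$, and of the indeterminacy locus of $\phi$, with induced morphisms $p : W \to X$ and $q : W \to X^+$ satisfying $fp = f^+q$. Since $W$ is smooth and $Y$ has rational singularities, $R^i(fp)_*\mathcal{O}_W = R^i(f^+q)_*\mathcal{O}_W = 0$ for $i > 0$ and $(fp)_*\mathcal{O}_W = \mathcal{O}_Y$. The remaining task is to show $R^iq_*\mathcal{O}_W = 0$ for $i > 0$, which is exactly rationality of $X^+$. Since $(X^+, \Delta^+)$ is log canonical, $X^+$ has Du Bois singularities by Koll\'ar--Kov\'acs, so by Kov\'acs's characterization rationality of the (normal) variety $X^+$ is equivalent to its being Cohen--Macaulay. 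The plan is then to derive the Cohen--Macaulay property of $X^+$ from Grothendieck duality applied to $f^+$, together with the relative Grauert--Riemenschneider vanishing $R^ih_*\omega_V = 0$ ($i > 0$) for a resolution $h : V \to X^+$ and the relative Kodaira-type vanishing for log canonical pairs developed in \cite{fujino-fund}, using the $f^+$-ampleness of $K_{X^+}+\Delta^+$ together with the Cohen--Macaulayness of $Y$ (which follows from its rationality).

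\textbf{Main obstacle.} The principal difficulty is Step 2. In contrast with Theorem \ref{thm1.1}, where $-(K_X+\Delta)$ is $f$-ample and allows direct application of a Kodaira-type vanishing to $\mathcal{O}_X$, here $K_{X^+}+\Delta^+$ is ample in the \emph{opposite} direction, so one cannot invoke vanishing for $\mathcal{O}_{X^+}$ directly. Instead one must work on the dualizing sheaf side, exploiting that log canonical singularities are Du Bois in order to reduce rationality to Cohen--Macaulayness, and then carefully apply the vanishing theorems for lc pairs via Grothendieck duality. The bookkeeping of log canonical centers, and the verification that the relative lc vanishing genuinely applies in this configuration, is the delicate point.
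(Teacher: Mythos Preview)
Your Step~1 coincides with the paper's opening move: apply Theorem~\ref{thm1.1} to $f$ and conclude that $Y$ has rational singularities.

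Step~2, however, diverges from the paper and contains a real gap. The paper does not try to establish Cohen--Macaulayness of $X^+$ via duality. Instead it argues as follows. Since $f^+$ is an isomorphism off $\Exc(f^+)$ and $Y$ has rational singularities, for any resolution $g:Z\to X^+$ the sheaves $R^ig_*\mathcal O_Z$ are supported on $\Exc(f^+)$. The negativity lemma, applied to the whole diagram using all three hypotheses (i)--(iii), shows that discrepancies strictly increase under $\phi$ over the exceptional loci, so $(X^+,\Delta^+)$ has \emph{no log canonical center contained in} $\Exc(f^+)$. Then the Alexeev--Hacon result (Theorem~\ref{thm3.2}) says every associated prime of $R^ig_*\mathcal O_Z$ is the generic point of some log canonical center of $(X^+,\Delta^+)$. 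These three facts together force $R^ig_*\mathcal O_Z=0$ for $i>0$.

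The problem with your outline is twofold. First, you never invoke hypothesis~(i) or the negativity lemma; your argument, if it worked, would prove rationality of $X^+$ from (iii) and the rationality of $Y$ alone, with no input from the $X$ side of the diagram. Second, the reduction to Cohen--Macaulayness is not a simplification: for a normal Du~Bois variety, Cohen--Macaulayness is \emph{equivalent} to rationality (Kov\'acs--Schwede--Smith), so you have merely restated the goal. The Grauert--Riemenschneider step you propose gives at best $Rf^+_*(h_*\omega_V)\simeq\omega_Y$ for a resolution $h:V\to X^+$, which does not control the dualizing complex of $X^+$ along $\Exc(f^+)$; and the Kodaira-type vanishing for log canonical pairs applies to sheaves $\mathcal O_{X^+}(L)$ with $L-(K_{X^+}+\Delta^+)$ relatively ample, with no evident choice of $L$ yielding the Cohen--Macaulay property. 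What is genuinely needed is the absence of log canonical centers inside $\Exc(f^+)$, and that is exactly what the negativity comparison with $(X,\Delta)$ supplies---the ingredient your plan omits.
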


Theorem \ref{thm1.2} follows from the well-known 
negativity lemma (see, for example, \cite[Lemma 3.38]{kollar-mori} 
and \cite[Lemma 2.3.27]{fujino-foundation}) and 
the result on nonrational centers of log canonical 
pairs due to Alexeev--Hacon (see \cite{alexeev-hacon}), which can be obtained 
in the framework of \cite{fujino-fund}. 

\begin{rem}\label{rem1.3}
In Theorem \ref{thm1.2}, the log canonicity 
of $(X^+, \Delta^+)$ follows from the 
other conditions of Theorem \ref{thm1.2} by the 
negativity lemma (see, for example, \cite[Lemma 3.38]{kollar-mori} and 
\cite[Lemma 2.3.27]{fujino-foundation}). 
It is sufficient to assume that $X^+$ is a normal variety and $\Delta^+$ is 
an effective $\mathbb R$-divisor on $X^+$ such that 
$K_{X^+}+\Delta^+$ is $\mathbb R$-Cartier. 
\end{rem}

Note that the singularities of $X$ are not 
always rational when $(X, \Delta)$ is only log canonical. 
Moreover, $X$ is not necessarily Cohen--Macaulay. 
This is one of difficulties when we treat log canonical 
pairs. We hope that Theorem \ref{thm1.1} and Theorem \ref{thm1.2} 
will be useful for the study of log canonical pairs. 

\begin{say}[MMP for log canonical pairs with only rational 
singularities]\label{say1.4}
Let us discuss the minimal model 
program for log canonical 
pairs with only rational singularities. 

Let $(X, \Delta)$ be a log canonical pair and let 
$\pi:X\to S$ be a projective 
morphism onto a variety $S$. 
Then we know that 
we can always run the minimal model 
program 
starting from $\pi:(X, \Delta)\to S$ (for the details, 
see, for example, \cite{fujino-fund}, 
\cite{birkar}, \cite{hacon-xu}, \cite{fujino-some}, \cite{fujino-foundation}, and so on). 
We further assume that $X$ has only 
rational singularities. 
Then, Theorem \ref{thm1.1} and Theorem \ref{thm1.2} say that 
every variety appearing in the minimal model program 
starting from $\pi:(X, \Delta)\to S$ has only 
rational singularities. 

From now on, we will see a contraction morphism more precisely. 
Let 
$$
f:(X, \Delta)\to Y
$$ 
be a contraction morphism  
such that 
\begin{itemize}
\item[(i)] $(X, \Delta)$ is a $\mathbb Q$-factorial log canonical 
pair, 
\item[(ii)] $-(K_X+\Delta)$ is $f$-ample, and  
\item[(iii)] $\rho(X/Y)=1$. 
\end{itemize}
Then we have the following three cases. 

\begin{case}[Divisorial contraction]\label{case1} 
$f$ is divisorial, that is, $f$ is a birational contraction which contracts 
a divisor. 
In this case, the exceptional locus $\Exc(f)$ of $f$ is a prime divisor 
on $X$ and $(Y, \Delta_Y)$ is a $\mathbb Q$-factorial log canonical pair with 
$\Delta_Y=f_*\Delta$. 
Moreover, if $X$ has only rational singularities, 
then $Y$ has only rational singularities by Theorem \ref{thm1.1}. 
\end{case}

\begin{case}[Flipping contraction]\label{case2} 
$f$ is flipping, that is, $f$ is a birational contraction which is small. 
In this case, we can take the flipping diagram: 
$$
\xymatrix{
X\ar@{-->}[rr]^{\varphi} \ar[rd]_{f} && X^+\ar[ld]^{f^+} \\
  & Y & 
} 
$$
where $f^+$ is a small projective birational morphism and  
\begin{itemize}
\item[(i$'$)] $(X^+, \Delta^+)$ is a $\mathbb Q$-factorial 
log canonical pair with $\Delta^+=\varphi_*\Delta$, 
\item[(ii$'$)] $K_{X^+}+\Delta^+$ is $f^+$-ample, and 
\item[(iii$'$)] $\rho(X^+/Y)=1$.  
\end{itemize}
By Theorem \ref{thm1.2}, we see that $X^+$ has only rational singularities 
when $X$ has only rational singularities. 
For the existence of log canonical flips, see \cite[Corollary 1.2]{birkar} and 
\cite[Corollary 1.8]{hacon-xu}. 
\end{case}

\begin{case}[Fano contraction]\label{case3}  
$f$ is a Fano contraction, that is, $\dim Y<\dim X$. 
Then $Y$ is $\mathbb Q$-factorial and has only log canonical 
singularities by \cite{fujino-some}. 
Moreover, if $X$ has only rational singularities, 
then $Y$ has only rational singularities by Theorem \ref{thm1.1}.  
\end{case} 

Anyway, the class of $\mathbb Q$-factorial log canonical 
rational singularities is closed under the minimal model 
program. 

Let $(X, \Delta)$ be a projective log canonical pair such that 
$K_X+\Delta$ is a semiample big $\mathbb Q$-Cartier divisor. 
Unfortunately, the log canonical model of $(X, \Delta)$ may have nonrational 
singularities even when $X$ has only rational singularities (see 
Example \ref{ex5.1}). This causes some undesirable phenomena 
(see Example \ref{ex5.3}). 
\end{say}

In this paper, we also give some supplementary 
results on the minimal model program for (not necessarily 
$\mathbb Q$-factorial) log canonical surfaces. 
We have: 

\begin{thm}[see Theorem \ref{thm4.1}]\label{thm1.5}
Let $(X, \Delta)$ be a log canonical 
surface and let $f:X\to Y$ be a projective 
birational morphism 
onto a normal surface $Y$. Assume that 
$-(K_X+\Delta)$ is $f$-ample. 
Then the exceptional locus $\Exc(f)$ of $f$ 
passes through no nonrational 
singular points of $X$. 
\end{thm}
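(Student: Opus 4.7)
The plan is to argue by contradiction: suppose some nonrational singular point $p\in X$ lies in $\Exc(f)$, and set $q=f(p)\in Y$. Since $f$ is a projective birational morphism between normal surfaces, $\Exc(f)$ has pure dimension one, so there exists an irreducible component $C\subset\Exc(f)$ passing through $p$. The strategy is to pass to the minimal resolution of $X$, exploit the genus-one exceptional cycle coming from the classification of nonrational log canonical surface singularities, and produce a numerical contradiction via the negativity lemma.

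For the setup, I would take $\pi:\widetilde X\to X$ to be the minimal resolution, so that $\widetilde X$ is smooth. A nonrational log canonical surface singularity is either simple elliptic or a cusp (see \cite{kollar-mori}); in both cases the exceptional cycle $E=\pi^{-1}(p)_{\mathrm{red}}$ is reduced with $p_a(E)=1$, and each of its components has discrepancy exactly $-1$. The minimality of $\pi$ together with the negativity lemma shows that every exceptional divisor of $\pi$ has nonpositive discrepancy, while the log canonical hypothesis bounds all discrepancies below by $-1$; combining these gives
$$\pi^{*}(K_X+\Delta)=K_{\widetilde X}+\pi_*^{-1}\Delta+E+R,$$
where $R$ is effective, $\pi$-exceptional, and supported over the singular points of $X$ other than $p$. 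Intersecting both sides with each component of $E$ and applying the projection formula forces $\pi_*^{-1}\Delta\cdot E=0$, so by effectivity $\pi_*^{-1}\Delta$ is disjoint from $E$; in particular $p\notin\Supp(\Delta)$, and hence $C\not\subset\Supp(\Delta)$.

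The numerical contradiction now takes one line. Because $-(K_X+\Delta)$ is $f$-ample and $C$ is $f$-exceptional, $(K_X+\Delta)\cdot C<0$, so $\pi^{*}(K_X+\Delta)\cdot\widetilde C<0$ where $\widetilde C:=\pi_*^{-1}C$. Combining this with $\pi_*^{-1}\Delta\cdot\widetilde C\geq 0$ and $R\cdot\widetilde C\geq 0$ (both effective, and $\widetilde C$ is not a component of either), $E\cdot\widetilde C\geq 1$ (since $p\in C$), and adjunction $K_{\widetilde X}\cdot\widetilde C=2p_a(\widetilde C)-2-\widetilde C^{2}$ on the smooth surface $\widetilde X$, the inequality rearranges to $\widetilde C^{2}>2p_a(\widetilde C)-1\geq -1$, and by integrality $\widetilde C^{2}\geq 0$. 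However $\widetilde C$ is $(f\circ\pi)$-exceptional, and the negativity lemma for the proper birational morphism $f\circ\pi:\widetilde X\to Y$ forces $\widetilde C^{2}<0$, the desired contradiction. The step I expect to require the most care is the local analysis at $p$: verifying that the coefficient of $E$ in $\pi^{*}(K_X+\Delta)$ is exactly $1$, that the remainder $R$ is effective, and that $\pi_*^{-1}\Delta$ is disjoint from $E$; once those structural facts are in place, adjunction and the negativity lemma finish the argument almost automatically.
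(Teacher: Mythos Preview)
Your argument is correct (the one point that needs a word is that the classification gives $a(E_i,X,0)=-1$, and you then need the standard monotonicity $a(E_i,X,\Delta)\le a(E_i,X,0)$ together with log canonicity to pin down $a(E_i,X,\Delta)=-1$; you flag this yourself at the end). However, it proceeds along a genuinely different line from the paper. The paper never looks at the structure of the exceptional fibre over a nonrational point: it first uses the negativity lemma to see that $(Y,f_*\Delta)$ is numerically dlt near $f(\Exc(f))$, hence $Y$ has rational singularities there (\cite[Theorem 4.12]{kollar-mori}); then the relative Kodaira-type vanishing for log canonical pairs gives $R^if_*\mathcal O_X=0$ for $i>0$, and Lemma~\ref{lem3.1} forces $X$ to have rational singularities along $\Exc(f)$. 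Your route instead invokes the classification of nonrational log canonical surface singularities (simple elliptic or cusp), extracts the discrepancy and intersection data of the minimal resolution, and finishes with an adjunction/negativity count. What each buys: your argument is elementary in that it avoids the Kodaira-type vanishing theorem for log canonical pairs, but it leans on exactly the classification result (\cite[Theorem~4.7]{kollar-mori}) that the paper's proof is designed to bypass; indeed, the whole point of Theorem~\ref{thm4.1} in the paper (see Remark~\ref{rem4.4}) is to make the log canonical surface MMP independent of that classification, so while your proof establishes the statement, it does not serve the role the theorem is meant to play.
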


By Theorem \ref{thm1.5}, the minimal model program for 
log canonical surfaces discussed in \cite[Theorem 3.3]{fujino-surface} 
becomes independent of the classification of 
numerically lc surface singularities in \cite[Theorem 4.7]{kollar-mori} 
(see Remark \ref{rem4.4}). 
When a considered surface is not $\mathbb Q$-factorial, 
the original proof of \cite[Theorem 3.3]{fujino-surface} 
uses the fact that a numerically lc surface is a log canonical surface (see 
\cite[Proposition 3.5 (2)]{fujino-surface}). 
For the proof of this fact, we need a rough classification of 
numerically lc surface singularities in \cite[Theorem 4.7]{kollar-mori} 
(see the proof of \cite[Proposition 3.5 (2)]{fujino-surface}). 

\begin{ack}
The author was partially supported by Grant-in-Aid for 
Young Scientists (A) 24684002 from JSPS. 
\end{ack}

We will work over $\mathbb C$, the complex number field, throughout this 
short note. 
We will freely use the basic notation of the minimal model 
program as in \cite{fujino-fund}. 

\section{Preliminaries}\label{sec2}

Let us recall the notion of singularities of pairs. 
For the details, see \cite{fujino-fund}, \cite{fujino-foundation}, and so on. 

\begin{say}[Singularities of pairs]\label{say2.1} 
A pair $(X, \Delta)$ consists of a normal variety $X$ 
and an effective $\mathbb R$-divisor 
$\Delta$ on $X$ such that $K_X+\Delta$ is $\mathbb R$-Cartier. 
A pair $(X, \Delta)$ is called 
kawamata log terminal (resp.~log canonical) if for any 
projective birational morphism 
$f:Y\to X$ from a normal variety $Y$, 
$a(E, X, \Delta)>-1$ (resp.~$\geq -1$) for 
every $E$, where 
$$K_Y=f^*(K_X+\Delta)+\sum _E a(E, X, \Delta)E.$$ 
Let $(X, \Delta)$ be a log canonical pair and let $W$ be a 
closed subset of $X$. Then $W$ is called a log canonical 
center of $(X, \Delta)$ if 
there are a projective birational morphism $f:Y\to X$ from a normal 
variety $Y$ and a prime divisor $E$ on $Y$ 
such that $a(E, X, \Delta)=-1$ 
and 
that $f(E)=W$. 
Let $(X, \Delta)$ be a log canonical pair. 
If there exists a projective birational morphism $f:Y\to X$ from a smooth 
variety $Y$ such that 
the $f$-exceptional locus 
$\Exc(f)$ and $\Exc (f)\cup \Supp f^{-1}_*\Delta$ are simple 
normal crossing divisors on $Y$ and that 
$a(E, X, \Delta)>-1$ for every $f$-exceptional divisor $E$, then 
$(X, \Delta)$ is called a divisorial log terminal pair. 
\end{say}

For surfaces, we can define $a(E, X, \Delta)$ without assuming 
that $K_X+\Delta$ is $\mathbb R$-Cartier. 
Then we can define numerically lc surfaces and numerically dlt surfaces 
(see \cite[Notation 4.1]{kollar-mori}). Precisely speaking, we have: 

\begin{say}[{Numerically lc and dlt due to Koll\'ar--Mori (see \cite[Notation 4.1]{kollar-mori})}]\label{say2.2}
Let $X$ be a normal surface and let $\Delta$ be an $\mathbb R$-divisor 
on $X$ whose coefficients are in $[0, 1]$. 
Let $f:Y\to X$ be a projective birational morphism 
from a smooth variety $Y$ with the exceptional divisor $E=\sum _i E_i$. 
Then the system of linear equations 
$$
E_j\cdot (\sum _i a_i E_i)=E_j \cdot (K_Y+f^{-1}_*\Delta)
$$ 
for any $j$ has a unique solution. 
We write this as 
$$
K_Y+f^{-1}_*\Delta\equiv \sum _ia(E_i, X, \Delta)E_i 
$$ 
with $a(E_i, X, \Delta)=a_i$. In this situation, we say that 
$(X, \Delta)$ is numerically lc if $a(E_i, X, \Delta)\geq -1$ for every exceptional 
curve $E_i$ and every resolution of singularities $f:Y\to X$. 
We say that $(X, \Delta)$ is numerically dlt if there exists a finite 
set $Z\subset X$ such that 
$X\setminus Z$ is smooth, 
$\Supp \Delta|_{X\setminus Z}$ is a simple normal crossing divisor, 
and $a(E, X, \Delta)>-1$ for every 
exceptional curve $E$ which maps to $Z$. 
\end{say}

Let us recall the basic operations and notation 
for $\mathbb R$-divisors. 
 
\begin{say}[$\mathbb R$-divisors]\label{say2.3}
Let $D=\sum a_iD_i$ be an $\mathbb R$-divisor on a normal variety $X$. 
Note that $D_i$ is a prime divisor for every $i$ and 
that $D_i\ne D_j$ for $i\ne j$. 
Of course, $a_i \in \mathbb R$ for every $i$. 
We put $\lfloor D\rfloor= \sum \lfloor a_i \rfloor D_i$ and 
call it the round-down of $D$. 
Note that, for every real number $x$, $\lfloor x\rfloor$ is the integer defined 
by $x-1<\lfloor x\rfloor \leq x$. 
We also put $\lceil D\rceil =-\lfloor -D\rfloor$ and call it the round-up of $D$. 
The fractional part $\{D\}$ denotes $D-\lfloor D\rfloor$. 
We put $$D^{=1}=\sum _{a_i=1} D_i 
\quad \text{and}\quad D^{<1}=\sum _{a_i<1}a_iD_i. $$ 

Let $B_1$ and $B_2$ be two $\mathbb R$-Cartier divisors 
on a normal variety $X$. 
Then $B_1$ is $\mathbb R$-linearly equivalent to 
$B_2$, denoted by $B_1\sim _{\mathbb R}B_2$, if 
$$B_1=B_2+\sum _{i=1}^k r_i (f_i)
$$ 
such that $f_i \in \mathbb C(X)$ and $r_i\in \mathbb R$ 
for every $i$. 
We note that 
$(f_i)$ is a principal Cartier divisor 
associated to $f_i$. 
Let $f:X\to Y$ be a morphism to a variety $Y$. 
If there is an $\mathbb R$-Cartier divisor $B$ on $Y$ such that 
$$
B_1\sim _{\mathbb R}B_2+f^*B, 
$$ 
then $B_1$ is said to be relatively $\mathbb R$-linearly equivalent to 
$B_2$. It is denoted by $B_1\sim _{\mathbb R, f}B_2$ or 
$B_1\sim _{\mathbb R, Y}B_2$. 

\end{say}

\section{Proof of theorems}\label{sec3} 

In this section, we prove Theorem \ref{thm1.1} and Theorem 
\ref{thm1.2}. 
Let us prove Theorem \ref{thm1.1}. 

\begin{proof}[Proof of Theorem \ref{thm1.1}]
By Kodaira type vanishing theorem for log canonical 
pairs (see, for example, \cite[Theorem 8.1]{fujino-fund} and 
\cite[Theorem 5.6.4]{fujino-foundation}), 
we have $R^if_*\mathcal O_X=0$ for every $i>0$. 
Therefore, we have $Rf_*\mathcal O_X\simeq \mathcal O_Y$. 
Then, by Kov\'acs's characterization of rational singularities 
(see \cite[Theorem 1]{kovacs} and \cite[Theorem 3.12.5]{fujino-foundation}), 
we obtain that $Y$ has only rational singularities. 
When $f$ is birational, see also Lemma \ref{lem3.1} below. 
\end{proof}

The following lemma is obvious by the definition of 
rational singularities. 

\begin{lem}\label{lem3.1} 
Let $f:X\to Y$ be a proper birational morphism 
between normal varieties. 
Assume that 
$R^if_*\mathcal O_X=0$ for every $i>0$. 
Then $X$ has only rational singularities 
if and only if $Y$ has only rational singularities.  
\end{lem}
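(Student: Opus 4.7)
The plan is to fix a resolution $g\colon W\to X$ of $X$ and set $h=f\circ g\colon W\to Y$, a proper birational morphism from the smooth variety $W$ (hence a resolution of $Y$). Recall the derived-category reformulation of the definition: a normal variety $Z$ has only rational singularities if and only if $R\pi_*\mathcal O_{\tilde Z}\simeq \mathcal O_Z$ for some (equivalently, any) resolution $\pi\colon \tilde Z\to Z$. The hypothesis $R^if_*\mathcal O_X=0$ for $i>0$, combined with $f_*\mathcal O_X=\mathcal O_Y$ (automatic from normality and Zariski's main theorem), is equivalent to $Rf_*\mathcal O_X\simeq \mathcal O_Y$. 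Via the composition identity
\[
Rh_*\mathcal O_W\simeq Rf_*(Rg_*\mathcal O_W),
\]
the rationalities of $X$ and of $Y$ should translate into one another.

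For the forward direction, if $Rg_*\mathcal O_W\simeq \mathcal O_X$ (i.e.\ $X$ is rational), then $Rh_*\mathcal O_W\simeq Rf_*\mathcal O_X\simeq \mathcal O_Y$, so $Y$ is rational. For the reverse direction, I would first observe that the natural morphism $\alpha\colon \mathcal O_X\to Rg_*\mathcal O_W$ becomes an isomorphism after $Rf_*$: by naturality, $Rf_*(\alpha)$ is identified with the canonical morphism $\mathcal O_Y\to Rh_*\mathcal O_W$, an isomorphism once $Y$ is rational. Hence $Rf_*\,\mathrm{cone}(\alpha)=0$, and the goal becomes to deduce $\mathrm{cone}(\alpha)=0$, that is, $R^qg_*\mathcal O_W=0$ for all $q\geq 1$.

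To carry this out, the intended tool is the Leray spectral sequence
\[
E_2^{p,q}=R^pf_*R^qg_*\mathcal O_W\Rightarrow R^{p+q}h_*\mathcal O_W,
\]
for which $E_2^{p,0}=0$ for $p\geq 1$ by hypothesis and whose abutment vanishes in positive total degree once $Y$ is rational. A key geometric input is that the non-rational locus of $X$ lies inside $\Exc(f)$: any non-rational point $x$ of $X$ outside $\Exc(f)$ would make $Y$ non-rational at $f(x)$, contradicting rationality of $Y$.

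The main obstacle is precisely this reverse implication: passing from $Rf_*\mathrm{cone}(\alpha)=0$ to $\mathrm{cone}(\alpha)=0$ is not formal, because $Rf_*$ is not conservative on complexes supported inside $\Exc(f)$. I would attempt a downward induction on $q$ starting from the largest nonvanishing $R^qg_*\mathcal O_W$, exploiting that on the top row the incoming differentials vanish by maximality of $q$ and the outgoing differentials into row zero vanish by the hypothesis $E_2^{p,0}=0$ for $p\geq 1$; if the edge-term analysis does not close directly, the fallback is a local argument via the theorem on formal functions applied at each point of $Y$ in the image of the non-rational locus of $X$.
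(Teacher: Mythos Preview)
Your approach---compose a resolution $g\colon W\to X$ with $f$ and compare $Rg_*\mathcal O_W$, $Rf_*\mathcal O_X$, and $Rh_*\mathcal O_W$ via Leray---is exactly what the paper has in mind; the paper gives no argument beyond declaring the lemma ``obvious by the definition of rational singularities.'' Your forward direction is complete and correct.

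For the reverse direction you have correctly isolated the real issue: $Rf_*$ is not conservative on complexes supported in $\Exc(f)$, so $Rf_*\mathrm{cone}(\alpha)=0$ does not by itself force $\mathrm{cone}(\alpha)=0$. Your proposed downward induction, however, does not close as written. If $q_0$ is maximal with $R^{q_0}g_*\mathcal O_W\neq 0$, then the top row of the Leray spectral sequence indeed receives no differentials, but the outgoing differentials $d_r\colon E_r^{0,q_0}\to E_r^{r,\,q_0-r+1}$ land in rows strictly below $q_0$, which are \emph{not} assumed to vanish. Hence one only gets $E_\infty^{0,q_0}=0$, i.e.\ that $f_*R^{q_0}g_*\mathcal O_W$ is eventually killed by differentials, not that it vanishes. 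The ``outgoing differentials land in row zero'' clause you invoke applies only when $r=q_0+1$, not for all $r$. The formal-functions fallback you mention would need a genuinely new idea to succeed; as stated it is only a hope.

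That said, the only place the paper actually uses the reverse implication is Theorem~4.1, where $\dim X=2$. There the argument \emph{does} close immediately: for a normal surface $X$ the sheaves $R^qg_*\mathcal O_W$ ($q\geq 1$) are supported on the finite set $\mathrm{Sing}(X)$, hence $R^pf_*R^qg_*\mathcal O_W=0$ for all $p\geq 1$, $q\geq 1$. Together with the hypothesis $E_2^{p,0}=0$ for $p\geq 1$, the spectral sequence degenerates at $E_2$, giving $f_*R^qg_*\mathcal O_W=E_\infty^{0,q}=0$; and the pushforward of a skyscraper sheaf is a skyscraper with the same stalk, so $R^qg_*\mathcal O_W=0$. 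In short: your plan matches the paper's, your forward direction is fine, and your flagged obstacle in the reverse direction is genuine---the paper's ``obvious'' hides it, but in the surface case where the lemma is actually applied the obstacle disappears for the reason above.
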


Here, we give a proof of \cite[Theorem 1.2]{alexeev-hacon}, which 
is a main ingredient of Theorem \ref{thm1.2}, for 
the reader's convenience. 

\begin{thm}[{\cite[Theorem 1.2]{alexeev-hacon}}]\label{thm3.2}
Let $(X, \Delta)$ be a log canonical 
pair and let $f:Y\to X$ be a resolution of singularities. 
Then every associated prime of $R^if_*\mathcal O_Y$ is the generic 
point of some 
log canonical center of $(X, \Delta)$ for every $i>0$.  
\end{thm}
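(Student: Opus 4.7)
The plan is to reduce to the case where $f$ is a log resolution of $(X,\Delta)$ and then combine the Koll\'ar--Fujino strict support (torsion-free) theorem for log canonical pairs with Grothendieck--Serre duality.

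First I would note that the conclusion does not depend on the particular resolution. Given any resolution $f\colon Y\to X$, take a log resolution $g\colon Y'\to X$ of $(X,\Delta)$ dominating $Y$ through $h\colon Y'\to Y$. The morphism $h$ is birational between smooth varieties, so $R^ih_*\mathcal O_{Y'}=0$ for every $i>0$ and $h_*\mathcal O_{Y'}=\mathcal O_Y$; the Leray spectral sequence for $g=f\circ h$ therefore yields $R^ig_*\mathcal O_{Y'}\simeq R^if_*\mathcal O_Y$, and I may assume $f$ itself is a log resolution of $(X,\Delta)$.

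Next I would record the standard discrepancy decomposition. Writing
$$K_Y=f^*(K_X+\Delta)+\sum\nolimits_j a_jF_j$$
and splitting the sum according to the sign of $a_j$, put
$$B:=\sum\nolimits_{a_j\le 0}(-a_j)F_j,\qquad E:=\sum\nolimits_{a_j>0}a_jF_j.$$
Log canonicity of $(X,\Delta)$ forces $a_j\ge -1$, so $B$ is an effective simple normal crossing $\mathbb R$-divisor with coefficients in $[0,1]$; the divisor $E$ is effective and $f$-exceptional, and $K_Y+B=f^*(K_X+\Delta)+E$. Thus $(Y,B)$ is sub-dlt, and the $f$-images of the strata of $B^{=1}$ are exactly the log canonical centers of $(X,\Delta)$. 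At any point $x\in X$ lying outside $f(\Supp B^{=1})$, the pair $(X,\Delta)$ is kawamata log terminal near $x$, so by Elkik's theorem $R^if_*\mathcal O_Y=0$ there and the claim is vacuous.

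The central step is to invoke the Koll\'ar--Fujino strict support theorem (see, e.g., \cite{fujino-foundation}) for the sub-dlt pair $(Y, B)$ and the morphism $f$: every associated prime of $R^jf_*\omega_Y(B^{=1})$ is the generic point of $f(S)$ for some stratum $S$ of $B^{=1}$, hence a log canonical center of $(X,\Delta)$. Grothendieck--Serre duality for $f$ yields
$$R\mathcal{H}om_{\mathcal O_X}(Rf_*\mathcal O_Y,\omega_X^\bullet)\simeq Rf_*\omega_Y[\dim X],$$
and via the local-to-global $\mathcal{E}xt$ spectral sequence on $X$, combined with Grauert--Riemenschneider vanishing and the adjunction sequence $0\to\omega_Y\to\omega_Y(B^{=1})\to\omega_{B^{=1}}\to 0$ (which expresses the higher direct images of $\omega_Y(B^{=1})$ in terms of those of $\omega_{B^{=1}}$ supported on lc centers), one transfers the torsion-freeness just obtained into the desired control on the associated primes of $R^if_*\mathcal O_Y$. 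The main obstacle will be this last translation: since $X$ is only normal, not Cohen--Macaulay, $\omega_X^\bullet$ is a genuine complex rather than a shifted sheaf, and carefully propagating associated primes through the $\mathcal{E}xt$ spectral sequence on $X$ is the technical heart of the argument.
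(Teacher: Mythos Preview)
Your outline is a reasonable sketch of the duality-based strategy, close in spirit to the original Alexeev--Hacon argument, but it is not yet a proof: the step you yourself flag as ``the technical heart'' --- passing from control on the associated primes of $R^jf_*\omega_Y(B^{=1})$ to control on those of $R^if_*\mathcal O_Y$ via Grothendieck--Serre duality and an $\mathcal{E}xt$ spectral sequence over the non--Cohen--Macaulay base $X$ --- is left entirely unexecuted. Since $\omega_X^\bullet$ has several nonzero cohomology sheaves precisely along the non-klt locus, propagating associated primes through the biduality isomorphism and the spectral sequence requires real work (bounding the supports of the $\mathcal{E}xt$ terms and checking that no new associated primes appear). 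Until that is written out, the argument has a genuine gap.

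The paper's proof avoids duality altogether by a dlt blow-up trick. One factors $f$ through a dlt modification $g\colon (Z,\Delta_Z)\to (X,\Delta)$ with $K_Z+\Delta_Z=g^*(K_X+\Delta)$ and takes a log resolution $h\colon Y\to Z$ that is an isomorphism over the generic point of every lc center. Setting $E=\lceil -\Delta_Y^{<1}\rceil$, one has $E$ effective, $h$-exceptional, and $E\sim_{\mathbb R,f}K_Y+\{\Delta_Y\}+\Delta_Y^{=1}$. Reid--Fukuda vanishing gives $Rh_*\mathcal O_Y(E)\simeq\mathcal O_Z$, and since $Z$ is dlt (hence has rational singularities) also $Rh_*\mathcal O_Y\simeq\mathcal O_Z$; composing with $g$ yields $Rf_*\mathcal O_Y\simeq Rf_*\mathcal O_Y(E)$. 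Now the strict support theorem applies \emph{directly} to $R^if_*\mathcal O_Y(E)$, because $E$ is already of the form $K_Y+(\text{SNC boundary})$ up to $f$-$\mathbb R$-linear equivalence, and one reads off that every associated prime is the generic point of an lc center. No dualizing complex on $X$, no $\mathcal{E}xt$ spectral sequence: the whole comparison happens on the smooth model $Y$ via the intermediate rational-singularity model $Z$. This is both shorter and sidesteps exactly the obstacle you identified.
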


Note that $R^if_*\mathcal O_Y$ is independent of the 
resolution $f:Y\to X$. 

\begin{proof}
Without loss of generality, we may assume that 
$X$ is quasi-projective by shrinking $X$. 
We take a dlt blow-up $g: (Z, \Delta_Z)\to (X, \Delta)$ 
(see, for example, \cite[Theorem 4.4.21]{fujino-foundation} and 
\cite[Section 10]{fujino-fund}). 
This means that $g$ is a projective 
birational morphism 
such that $K_Z+\Delta_Z=g^*(K_X+\Delta)$ and that 
$(Z, \Delta_Z)$ is a divisorial log terminal pair. 
It is well known that $Z$ has only rational singularities. 
We take a projective birational morphism 
$h: Y\to Z$ such that 
$K_Y+\Delta_Y=h^*(K_Z+\Delta_Z)$, 
$Y$ is smooth, and $\Supp \Delta_Y$ is a simple normal crossing divisor 
on $Y$. 
We may assume that $h$ is an isomorphism 
over the generic point of any log canonical 
center of $(Z, \Delta_Z)$ by Szab\'o's resolution lemma 
(see, for example, \cite[Remark 2.3.18 and Lemma 2.3.19]
{fujino-foundation}). 
Then we have 
$$
K_Y+\{\Delta_Y\} +\Delta^{=1}_Y+\lfloor \Delta^{<1}_Y\rfloor 
=K_Y+\Delta_Y\sim _{\mathbb R, f} 0, 
$$ 
where $f=g\circ h: Y\to X$. 
We put $E=\lceil -\Delta^{<1}_Y\rceil$. 
Then $E$ is effective, $h$-exceptional, 
and $E\sim _{\mathbb R, f} K_Y+\{\Delta_Y\}+\Delta^{=1}_Y$. 
Therefore, we obtain $Rh_*\mathcal O_Y(E)\simeq 
\mathcal O_Z$ since $R^ih_*\mathcal O_Y(E)=0$ for every 
$i>0$ by the vanishing theorem of Reid--Fukuda type (see, for example, 
\cite[Lemma 6.2]{fujino-fund} and 
\cite[Theorem 3.2.11]{fujino-foundation}) and 
$h_*\mathcal O_Y(E)\simeq \mathcal O_Z$. 
Note that $Rh_*\mathcal O_Y\simeq \mathcal O_Z$ since 
$Z$ has only rational singularities. 
Thus, 
we obtain 
$$
Rf_*\mathcal O_Y(E)  \simeq  Rg_*Rh_* \mathcal O_Y(E)
 \simeq Rg_*\mathcal O_Z 
 \simeq Rg_*Rh_*\mathcal O_Y
 \simeq Rf_*\mathcal O_Y. 
$$
By \cite[Theorem 6.3 (i)]{fujino-fund} 
(see also \cite[Theorem 3.16.3 (i)]{fujino-foundation}), we have that 
every associated prime of $R^if_*\mathcal O_Y(E)\simeq 
R^if_*\mathcal O_Y$ is the generic 
point of some log canonical center of $(X, \Delta)$ for every $i>0$. 
\end{proof}

Let us prove Theorem \ref{thm1.2}. 

\begin{proof}[Proof of Theorem \ref{thm1.2}]
Let $g:Z\to X^+$ be a resolution of singularities. 
Let $\Exc (f^+)$ be the exceptional locus of $f^+: X^+\to Y$. 
By Theorem \ref{thm1.1}, 
we know that $Y$ has only rational singularities. 
Therefore, $X^+\setminus \Exc(f^+)$ has only rational singularities. 
Thus, $\Supp R^ig_*\mathcal O_Z\subset \Exc(f^+)$ for 
every $i>0$. 
By the negativity lemma (see, for example, \cite[Lemma 3.38]{kollar-mori} and 
\cite[Lemma 2.3.27]{fujino-foundation}), 
there are no log canonical centers of $(X^+, \Delta^+)$ contained 
in $\Exc(f^+)$. 
By Theorem \ref{thm3.2}, 
every associated prime of $R^ig_*\mathcal O_Z$ is 
the generic point of some log canonical 
center of $(X^+, \Delta^+)$ for every $i>0$. 
Thus, we have $R^ig_*\mathcal O_Z=0$ for every $i>0$. 
This means that 
$X^+$ has only rational singularities.  
\end{proof}

\section{On log surfaces}\label{sec4}

In this section, we give some 
results on the minimal model program for log canonical surfaces 
(see \cite{fujino-surface}, \cite{fujino-tanaka}, and \cite{tanaka}). 
This section is a supplement to \cite{fujino-surface}. 

The following theorem is the main result of this section. 

\begin{thm}\label{thm4.1}
Let $(X, \Delta)$ be a log canonical 
surface and let $f:X\to Y$ be a projective 
birational morphism 
onto a normal surface $Y$. Assume that 
$-(K_X+\Delta)$ is $f$-ample. 
Then the exceptional locus $\Exc(f)$ of $f$ 
passes through no nonrational 
singular points of $X$. 
In particular, every $f$-exceptional curve is a 
$\mathbb Q$-Cartier divisor. 
Moreover, if the relative Picard number 
$\rho (X/Y)=1$, then $\Exc(f)$ is an irreducible curve 
and $K_Y+\Delta_Y$, where $\Delta_Y=f_*\Delta$, is $\mathbb R$-Cartier. 
\end{thm}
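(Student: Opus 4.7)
The strategy is to prove the main assertion---that $\Exc(f)$ meets no nonrational singular point of $X$---by contradiction, combining Theorem~\ref{thm3.2} (Alexeev--Hacon) with intersection theory on a dlt blow-up, and to deduce the remaining assertions rather formally. Suppose for contradiction that $x\in\Exc(f)$ is a nonrational singular point of $X$. For any resolution $g\colon Z\to X$, $(R^1 g_*\mathcal O_Z)_x\neq 0$, so Theorem~\ref{thm3.2} implies that $\{x\}$ is a zero-dimensional log canonical center of $(X,\Delta)$.

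Take a dlt blow-up $g_1\colon(X_1,\Delta_1)\to(X,\Delta)$, arranged (using the surface setting) so that $g_1$ extracts only log canonical places; then $(X_1,\Delta_1)$ is dlt, $X_1$ is $\mathbb Q$-factorial, $K_{X_1}+\Delta_1=g_1^*(K_X+\Delta)$, and every component of $g_1^{-1}(x)$ appears in $\Delta_1$ with coefficient $1$. Choose an irreducible curve $C_0\subset\Exc(f)$ through $x$ and let $\tilde C_0\subset X_1$ be its strict transform. By the projection formula and the $f$-ampleness of $-(K_X+\Delta)$,
$$
(K_{X_1}+\Delta_1)\cdot\tilde C_0 \;=\; (K_X+\Delta)\cdot C_0 \;<\;0.
$$
On the other hand, $\Delta_1\cdot\tilde C_0>0$ since $\tilde C_0$ meets $g_1^{-1}(x)\subset\Supp\Delta_1^{=1}$, and $\tilde C_0^2<0$ since $\tilde C_0$ is contracted to a point by $f\circ g_1$ on the surface $X_1$. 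The plan is then to derive a contradiction via adjunction on the $\mathbb Q$-factorial dlt surface $X_1$: in the clean case where $X_1$ is smooth along $\tilde C_0$, one has $\Delta_1\cdot\tilde C_0\ge 1$ as a positive integer intersection with $\Delta_1^{=1}$, whence $K_{X_1}\cdot\tilde C_0<-1$, and the adjunction formula $K_{X_1}\cdot\tilde C_0 = 2p_a(\tilde C_0)-2-\tilde C_0^2\ge -2-\tilde C_0^2$ together with $\tilde C_0^2\le -1$ yields $K_{X_1}\cdot\tilde C_0\ge -1$, a direct contradiction. The main obstacle I anticipate is the case where $\tilde C_0$ passes through klt singularities of $X_1$ (which can only occur away from $\Supp\Delta_1^{=1}$, by the dlt property): there, $\tilde C_0^2$ becomes a negative rational possibly in $(-1,0)$ and the Mumford adjunction formula acquires non-negative corrections from the local differents at those klt points, so one must carry out a finer local analysis to re-establish the strict inequality that closes the contradiction.

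Once the main assertion is established, $\Exc(f)$ lies in the locus where $X$ has only rational singularities; since rational surface singularities are $\mathbb Q$-factorial (Artin), every $f$-exceptional curve is $\mathbb Q$-Cartier, giving the second assertion. For the last, assume $\rho(X/Y)=1$. All $f$-exceptional curves are then $\mathbb Q$-Cartier, and the negative definiteness of the exceptional intersection matrix makes their classes linearly independent in $N^1(X/Y)$; hence $\rho(X/Y)=1$ forces $\Exc(f)$ to be a single irreducible curve $C$. Since $-(K_X+\Delta)$ is $f$-ample, $K_X+\Delta\equiv_f\lambda C$ for some $\lambda>0$, so $M:=K_X+\Delta-\lambda C$ is an $f$-numerically trivial $\mathbb R$-Cartier divisor; by the standard push--pull argument (valid because $X$ is $\mathbb Q$-factorial along $\Exc(f)$), $M=f^*D$ for some $\mathbb R$-Cartier divisor $D$ on $Y$, and pushing forward yields $K_Y+\Delta_Y=f_*M+\lambda f_*C=D$, which is $\mathbb R$-Cartier.
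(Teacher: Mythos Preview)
Your approach differs substantially from the paper's and, as you partly acknowledge, is incomplete. Beyond the klt-point issue you flag, even your ``clean case'' does not close: the claim $\Delta_1\cdot\tilde C_0\ge 1$ is unjustified when $\tilde C_0$ itself appears in $\Delta_1$ with some coefficient $c\in(0,1)$, since then $\Delta_1\cdot\tilde C_0=c\,\tilde C_0^{\,2}+(\Delta_1-c\tilde C_0)\cdot\tilde C_0$ and the negative self-intersection term can drag the total below $1$ (for instance $c=\tfrac12$, $\tilde C_0^{\,2}=-1$, and a single transversal meeting with $g_1^{-1}(x)$ give $\Delta_1\cdot\tilde C_0=\tfrac12$). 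In that situation the inequalities $(K_{X_1}+\Delta_1)\cdot\tilde C_0<0$, $K_{X_1}\cdot\tilde C_0\ge-1$, and $\Delta_1\cdot\tilde C_0\ge\tfrac12$ are mutually compatible, so no contradiction is produced. Pushing your line further would require the Mumford different and, in effect, structural information about nonrational log canonical surface singularities---exactly the classification input the theorem is meant to avoid (cf.\ Remark~\ref{rem4.4}).

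The paper's argument is much shorter and runs in the opposite direction: it first shows that $Y$ has rational singularities and then transfers this back to $X$. After shrinking $Y$ so that $f(\Exc(f))$ is a single point, the negativity lemma makes $(Y,\Delta_Y)$ numerically dlt, hence $Y$ has only rational singularities. Kodaira-type vanishing for log canonical pairs (as in the proof of Theorem~\ref{thm1.1}) gives $R^if_*\mathcal O_X=0$ for all $i>0$, and Lemma~\ref{lem3.1} then forces $X$ to have rational singularities in a neighborhood of $\Exc(f)$. No dlt blow-up, no appeal to Theorem~\ref{thm3.2}, and no curve-by-curve adjunction are needed. Your deductions of the $\mathbb Q$-Cartier statement and of the $\rho(X/Y)=1$ case are essentially the same as the paper's.
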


\begin{proof}
By shrinking $Y$, we may assume that 
$f(\Exc(f))=P$ and that $(Y, \Delta_Y)$, where 
$\Delta_Y=f_*\Delta$, is numerically dlt by the negativity lemma 
(see, for example, \cite[Lemma 3.41]{kollar-mori} and 
\cite[Lemma 2.3.25]{fujino-foundation}). 
Therefore, $Y$ has only rational singularities (see \cite[Theorem 4.12]
{kollar-mori}). 
By the Kodaira type vanishing theorem 
as in the proof of Theorem \ref{thm1.1} (see also \cite[Theorem 6.2]{fujino-tanaka}), 
we obtain $R^if_*\mathcal O_X=0$ for every $i>0$. 
Thus, $X$ has only rational singularities 
in a neighborhood of $\Exc(f)$ by Lemma \ref{lem3.1}. 
This means that $X$ is $\mathbb Q$-factorial around $\Exc(f)$ 
(see, for example, \cite[Proposition (17.1)]{lipman} and 
\cite[Proposition 20.2]{tanaka}). Therefore, every $f$-exceptional 
curve is a $\mathbb Q$-Cartier divisor. 
From now on, we assume that $\rho(X/Y)=1$. 
We take an irreducible $f$-exceptional curve $E$. 
Then $E^2<0$ and $E\cdot C<0$ for every $f$-exceptional curve $C$. 
This means that $E=\Exc(f)$. 
We can take a real number $a$ such that 
$(K_X+\Delta+aE)\cdot E=0$. 
Then, by the contraction theorem (see \cite[Theorem 3.2]{fujino-surface} and 
\cite[Theorem 17.1]{tanaka}), we can check 
that $K_Y+\Delta_Y$ is $\mathbb R$-Cartier and 
$K_X+\Delta+aE=f^*(K_Y+\Delta_Y)$. 
\end{proof}

As an easy consequence of Theorem \ref{thm4.1}, 
we have: 

\begin{cor}\label{cor4.2}
In the minimal model program for log canonical 
surfaces, the number of nonrational log canonical singularities 
never decreases.  
\end{cor}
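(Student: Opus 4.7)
The plan is to reduce the claim to a single application of Theorem \ref{thm4.1} at each step of the MMP. A step of the minimal model program for a log canonical surface pair $(X, \Delta)$ is the contraction morphism $f\colon X\to Y$ of a $(K_X+\Delta)$-negative extremal ray, so in particular $-(K_X+\Delta)$ is $f$-ample. In dimension two there are no flipping contractions, since any small birational morphism between normal surfaces is automatically an isomorphism (a proper birational morphism with zero-dimensional exceptional set is finite, hence an isomorphism onto a normal target). Thus $f$ is either a divisorial contraction onto a normal surface $Y$ or a Mori fiber space with $\dim Y<2$. In the Mori fiber case $Y$ is a smooth curve or a point and the MMP terminates, so there is no ``next'' surface model on which nonrational singularities could be lost. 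Hence the content of the corollary is confined to a single divisorial step.

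Fix such a divisorial contraction $f\colon X\to Y$ and set $\Delta_Y = f_*\Delta$. By Theorem \ref{thm4.1}, $\Exc(f)$ passes through no nonrational singular point of $X$. Therefore $f$ restricts to an isomorphism on an open neighborhood $U$ of the finite set of nonrational log canonical singular points of $X$, and under this isomorphism $\Delta|_U$ corresponds to $\Delta_Y|_{f(U)}$. Since both rationality of singularities and log canonicity are local analytic conditions, each nonrational log canonical singularity of $X$ corresponds via $f$ to a distinct nonrational log canonical singularity of $Y$; distinctness follows from $f$ being birational and an isomorphism on $U$. Hence the number of nonrational log canonical singularities on $Y$ is at least the number on $X$, and iterating this over all birational steps of the MMP yields the corollary.

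The only nontrivial input is Theorem \ref{thm4.1} itself; once it is granted, the argument above is pure bookkeeping, and no real obstacle remains. The conceptual takeaway is that a $(K_X+\Delta)$-negative extremal divisorial contraction cannot ``resolve away'' a nonrational log canonical singularity, because by Theorem \ref{thm4.1} the contracted curve is disjoint from every such singularity to begin with.
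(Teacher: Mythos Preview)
Your proof is correct and follows the approach the paper intends: the corollary is presented there as an immediate consequence of Theorem \ref{thm4.1}, with no separate argument given, and you have simply written out the routine deduction (no flips in dimension two, Fano case terminates, divisorial case handled by Theorem \ref{thm4.1} making $f$ an isomorphism near each nonrational point).
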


\begin{rem}\label{rem4.3}
Theorem \ref{thm4.1} and Corollary 
\ref{cor4.2} hold true over any algebraically closed field $k$. 
This is because the vanishing theorems 
for birational morphisms from log surfaces hold 
true even when the characteristic of $k$ is positive 
(see, for example, \cite[Theorem 6.2]{fujino-tanaka}). 
\end{rem}

We give an important remark 
on \cite{fujino-surface}. 

\begin{rem}\label{rem4.4}
In \cite{fujino-surface}, we used the fact that 
a numerically lc surface is a log canonical surface (see \cite[Proposition 3.5 (2)]
{fujino-surface}) for the proof of the minimal model program for 
(not necessarily $\mathbb Q$-factorial) log canonical surfaces 
(see \cite[Theorem 3.3]{fujino-surface}). 
Note that the proof of \cite[Proposition 3.5 (2)]{fujino-surface} more or 
less depends on the classification of numerically lc surface singularities 
in \cite[Theorem 4.7]{kollar-mori}. 
By using Theorem \ref{thm4.1}, 
we can check that $K_{X_i}+\Delta_i$ is $\mathbb R$-Cartier 
in the poof of \cite[Theorem 3.3]{fujino-surface} without 
using \cite[Proposition 3.5 (2)]{fujino-surface}. 
This means that the minimal model program 
for log canonical surfaces in \cite[Theorem 3.3]{fujino-surface} is 
independent of the classification of (numerically) lc surface singularities 
(see \cite[Theorem 4.7]{kollar-mori}). 
\end{rem}

\section{Examples}\label{sec5}
 
In this section, let us see that nonrational singularities sometimes 
may cause undesirable phenomena. 

Note that the log canonical model of a log canonical surface 
may have nonrational singularities. 

\begin{ex}\label{ex5.1}
Let $C\subset \mathbb P^2$ be an elliptic curve and 
let $V\subset \mathbb P^3$ be a cone over $C\subset \mathbb P^2$. 
Let $p:X\to V$ be the blow-up at the vertex $P$ of $V$. 
We take a general very ample smooth Cartier divisor 
$\Delta_V$ on $V$ such that 
$K_V+\Delta_V$ is very ample. 
We put $K_X+\Delta=p^*(K_V+\Delta_V)$. 
Then $X$ is smooth, 
$(X, \Delta)$ is log canonical, 
and $K_X+\Delta$ is big. 
Note that $p=\Phi_{|K_X+\Delta|}: X\to V$. 
We also note that $(V, \Delta_V)$ is log canonical 
and that the singularity $P\in V$ is not rational. 
\end{ex}

A finite \'etale morphism 
between kawamata log terminal pairs of 
log general type induces a natural finite \'etale cover 
of their log canonical models in any dimension. 

\begin{thm}\label{thm5.2}
Let $X$ be a normal projective variety 
and let $\Delta$ be an effective $\mathbb Q$-divisor on 
$X$ such that 
$(X, \Delta)$ is kawamata log terminal. 
Let $f:Y\to X$ be a finite \'etale morphism 
such that $K_Y+\Delta_Y=f^*(K_X+\Delta)$. 
Assume that 
$K_X+\Delta$ is big. 
Then we have a commutative 
diagram 
$$
\xymatrix{
Y\ar[d]_{f} \ar@{-->}[r]^q& Y_c\ar[d]^{f_c}\\
X \ar@{-->}[r]_p& X_c 
}
$$
where $p$ and $q$ are birational maps, 
$(X_c, \Delta_c)$ {\em{(}}resp.~$(Y_c, \Delta_{Y_c})${\em{)}} is the 
log canonical model of $(X, \Delta)$ 
{\em{(}}resp.~$(Y, \Delta_Y)${\em{)}}, 
$f_c$ is a finite \'etale morphism, and 
$K_{Y_c}+\Delta_{Y_c}=f_c^*(K_{X_c}+\Delta_c)$. 
\end{thm}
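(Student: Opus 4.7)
The plan is to build the \'etale cover of log canonical models by Stein-factorizing a common log resolution, using BCHM for the existence of the models and rational chain-connectedness of fibers of klt resolutions (Hacon--McKernan) for \'etaleness.

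Since $f$ is finite \'etale, $(Y,\Delta_Y)$ is klt and $K_Y+\Delta_Y=f^*(K_X+\Delta)$ is big, so by BCHM both log canonical models $X_c$ and $Y_c$ exist. Choose a log resolution $\mu:W\to X$ of $(X,\Delta)$ such that $\bar\mu:W\to X_c$ is a morphism realizing the LC model. Base-change by $f$: set $W_Y=W\times_X Y$, so $\tilde\mu:W_Y\to Y$ is a log resolution of $(Y,\Delta_Y)$ (\'etale base change of a log resolution is again a log resolution) and $\tilde f:W_Y\to W$ is finite \'etale of degree $d=\deg f$. Take the Stein factorization $W_Y\xrightarrow{g}V\xrightarrow{h}X_c$ of $\bar\mu\circ\tilde f$, with $h$ finite surjective and $g$ having connected fibers; then $V$ is normal and projective, and the function-field count gives $[k(V):k(X_c)]=d$ together with $V$ birational to $Y$.

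To see $h$ is \'etale, fix $x\in X_c$ and let $F_x=\bar\mu^{-1}(x)$. Then $\tilde f^{-1}(F_x)\to F_x$ is a degree-$d$ finite \'etale cover, and $h^{-1}(x)$ is in bijection with its set of connected components, so $h$ is \'etale of degree $d$ at $x$ if and only if this cover splits completely. By Hacon--McKernan the fibers of a birational morphism onto a klt variety are rationally chain connected, hence (after normalization and comparison of \'etale fundamental groups, using that the cover restricts from the \'etale cover $\tilde f:W_Y\to W$ of the smooth $W$) the cover splits; thus $h$ is finite \'etale. Setting $\Delta_V:=h^*\Delta_c$ gives $K_V+\Delta_V=h^*(K_{X_c}+\Delta_c)$ ample with $(V,\Delta_V)$ klt birational to $(Y,\Delta_Y)$, so by uniqueness of the LC model $V=Y_c$, $h=f_c$, and $K_{Y_c}+\Delta_{Y_c}=f_c^*(K_{X_c}+\Delta_c)$. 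Commutativity of the claimed diagram is automatic from the construction.

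The main obstacle is the \'etaleness step. Rational chain-connectedness of fibers of klt resolutions is standard, but deducing triviality of the restricted finite \'etale cover over a possibly reducible and non-normal fiber $F_x$ needs care---either a normalization argument combined with the fact that the cover extends to an \'etale cover of the smooth ambient $W$, or as an alternative a $G$-equivariant minimal model program argument applied to the Galois closure $\widetilde Y\to X$ with Galois group $G$, descending inductively to check that the quotient $\widetilde Y_i\to\widetilde Y_i/G$ remains \'etale through every divisorial contraction and flip, so that the final map $\widetilde Y_c\to X_c$ is \'etale and $f_c$ arises as the induced finite \'etale quotient by $H\le G$.
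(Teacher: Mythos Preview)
Your outline is correct and far more detailed than what the paper gives: the paper's proof is a single sentence referring to \cite[Theorem~4.5]{fujino-zucker}, noting that $X_c$ and $Y_c$ have rational singularities because they are klt, and leaving the rest as an exercise. You have essentially written out that exercise.

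One point worth tightening in your main route: showing that $\tilde f^{-1}(F_x)\to F_x$ splits into $d$ sheets for every $x$ yields $|h^{-1}(x)|=d$, but \'etaleness of $h$ also requires flatness, i.e.\ that $h_*\mathcal O_V=\bar\mu_*(\tilde f_*\mathcal O_{W_Y})$ be locally free of rank $d$. This is exactly where the paper's hint about rational singularities enters: once the \'etale cover of $W$ is trivial along each fiber of $\bar\mu$, it is trivial over a neighborhood (by a proper base change or analytic retraction argument), so locally $\tilde f_*\mathcal O_{W_Y}\simeq\mathcal O_W^{\oplus d}$ and hence $h_*\mathcal O_V\simeq(\bar\mu_*\mathcal O_W)^{\oplus d}\simeq\mathcal O_{X_c}^{\oplus d}$ using $R\bar\mu_*\mathcal O_W\simeq\mathcal O_{X_c}$. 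Equivalently, one may invoke Takayama's theorem that a resolution of a klt variety induces an isomorphism on fundamental groups, which packages the whole step at once; this is likely closer to the argument the paper has in mind. Your $G$-equivariant MMP alternative also works and sidesteps the fiber analysis entirely.
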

\begin{proof}
The proof of \cite[Theorem 4.5]{fujino-zucker} works with some suitable 
modifications. Note that $X_c$ and $Y_c$ have only rational singularities 
since $(X_c, \Delta_c)$ and $(Y_c, \Delta_{Y_c})$ are both 
kawamata log terminal pairs. 
We leave the details as an exercise for the reader. 
\end{proof}

Unfortunately, Theorem \ref{thm5.2} does not hold for 
log canonical pairs. 
This is because log canonical models of log canonical 
pairs sometimes 
have nonrational singularities. 

\begin{ex}\label{ex5.3}
Let $p: X\to V$ be as in Example \ref{ex5.1} and 
let $E$ be the $p$-exceptional 
divisor on $X$. 
Note that there is a natural $\mathbb P^1$-bundle structure 
$\pi:X\to C$ and $E$ is a section of $\pi$. 
We take a nontrivial finite \'etale cover $D\to C$. 
We put $Y=X\times _C D$ and $F=E\times _C D$. 
We put $K_Y+\Delta_Y=f^*(K_X+\Delta)$. 
Let $W$ be the log canonical model of $(Y, \Delta_Y)$. 
Then we have the following 
commutative diagram 
$$
\xymatrix{
Y \ar[r]^{q}\ar[d]_f & W \ar[d]^{h}\\ 
X\ar[r]_p& V
}
$$
such that $f$ is \'etale, $h$ is finite, but $h$ is not \'etale. 
Note that $q$ contracts $F$ to an isolated 
normal singular point $Q$ of $W$ such 
that $h^{-1}(P)=Q$ since $f^{-1}(E)=F$. 
We also note that 
the singularities of $V$ and $W$ are not rational.   
\end{ex}

\end{document}